\documentclass{article}
\usepackage{cite}
\usepackage{amsmath,amssymb,amsthm,mathrsfs}
\usepackage{booktabs}
\usepackage{titlesec,hyperref}
\usepackage{color}
\usepackage{soul}
\usepackage{fancyhdr}
\usepackage[margin=3cm]{geometry}
\usepackage{enumerate}
\usepackage[integrals]{wasysym}
\headsep=2mm \headheight=2mm \topmargin=0pt \oddsidemargin=0pt
\evensidemargin=1pt \textheight=230mm \textwidth=164mm
\usepackage{bm}
\usepackage{graphicx}
\usepackage{caption}
\usepackage{float}
\usepackage{subfigure}

\newtheorem{theo}{Theorem}[section]
\newtheorem{lemm}{Lemma}[section]

\newtheorem{prop}{Proposition}[section]
\newtheorem{rema}{Remark}[section]
\numberwithin{equation}{section}

\def\ud{{\rm d}}

\allowdisplaybreaks

\begin{document}

\title{Ill-posedness and global solution for the $b$-equation}
\author{
Yingying $\mbox{Guo}^{1}$ \footnote{email: guoyy35@fosu.edu.cn}
\quad and\quad
Weikui $\mbox{Ye}^{2}$ \footnote{email: 904817751@qq.com}\\
$^1\mbox{School}$ of Mathematics and Big Data, Foshan University,\\
Foshan, 528000, China\\
$^2\mbox{School}$ of Mathematical Sciences, South China Normal University,\\
Guangzhou, 510631, China
}
\date{}
\maketitle
\begin{abstract}
In this paper, we consider the Cauchy problem for the $b$-equation. Firstly, for $s>\frac32,$ if $u_{0}(x)\in H^{s}(\mathbb{R})$ and $m_{0}(x)=u_{0}(x)-u_{0xx}(x)\in L^{1}(\mathbb{R}),$ the global solutions of the $b$-equation is established when $b\geq1$ or $b\leq1.$ It's worth noting that our global result is a new result which doesn't need the condition that $m_{0}(x)$ keeps its sign. For $s<\frac32,$ it is shown (see \cite{hgh}) that the Cauchy problem of the $b$-equation is ill-posed in Sobolev space $H^{s}(\mathbb{R})$ when $b>1$ or $b<1.$ In the present paper, for $s=\frac32,$ we prove  that the Cauchy problem of the $b$-equation is also ill-posed in $H^{\frac32}(\mathbb{R})$ in the sense of norm inflation by constructing a class of special initial data when $b\neq1.$ 
\end{abstract}
Mathematics Subject Classification: 35Q53, 35G25, 35D30\\
\noindent Keywords: $b$-equation, Global solutions,  Ill-posedness,Norm inflation.


\section{Introduction}
\par
In the paper, we consider the Cauchy problem for the following $b$-equation
\begin{equation}\label{b}
\left\{\begin{array}{ll}
m_t+um_{x}+bu_{x}m=0,&\qquad t>0,\ x\in\mathbb{R},\\
m_{0}(x)=m(0,x):=u(0,x)-u_{xx}(0,x),&\qquad x\in\mathbb{R}
\end{array}\right.
\end{equation}
where $u(t,x)$ stands for the fluid velocity and $m=u-u_{xx}$ denotes the momentum density.

Note that $(1-\partial_{xx})^{-1}f=p\ast f$ for any $f\in L^{2}(\mathbb{R}),$ where $\ast$ denotes the convolution and $p(x)=\frac12 e^{-|x|}.$ Then \eqref{b} can be rewritten as
\begin{equation}\label{bb}
\left\{\begin{array}{ll}
u_t+uu_{x}=-\partial_{x}(1-\partial_{xx})^{-1}\Big(\frac{b}{2}u^{2}+\frac{b-3}{2}u^{2}_{x}\Big),&\qquad t>0,\ x\in\mathbb{R},\\
u(0,x)=u_{0}(x),&\qquad x\in\mathbb{R}.
\end{array}\right.
\end{equation}

The $b$-equation was proposed by Holm and Staley \cite{hs1,hs2} as a one-dimensional version of active fluid transport that is described by the family of 1+1 evolutionary equations. In the study of soliton equations, it is found that Eq. \eqref{b} for any $b\neq-1$ is included in the family of shallow water equations at quadratic order accuracy that are asymptotically equivalent under Kodama transformations. On the other hand, it was shown by Degasperis and Procesi \cite{dep} that Eq. \eqref{b} cannot be completely integrable unless $b=2$ or $b=3$ by taking advantage of the method of asymptotic integrability. 

For $b=2$, Eq. \eqref{b} becomes the classical Camassa-Holm (CH) equation 
\begin{align}\tag{CH}
u_{t}-u_{txx}+3uu_{x}=2u_{x}u_{xx}+uu_{xxx}.
\end{align}
The CH equation has been studied extensively in the last two decades because of its many remarkable distinctive properties. One of the particular features about the CH equation is that it has single peakon solutions and multipeakon solutions. The local well-posedness and ill-posedness for the CH equation in Sobolev and Besov spaces were established in \cite{d2,lo,glmy}. Furthermore, it also has global strong solutions \cite{ce4,lo}. Another particular feature is the wave breaking phenomena (the solutions remain bounded but its slope becomes unbounded at infinite time) \cite{c2,lo}. In fact, wave breaking is the only way in which singularities can arise in a classical solution \cite{c2}. 

For $b=3$, Eq. \eqref{b} becomes the Degasperis-Procesi (DP) equation
\begin{align}\tag{DP}
u_{t}-u_{txx}+4uu_{x}=3u_{x}u_{xx}+uu_{xxx}.
\end{align}
The DP equation has a similar form to the CH equation and also has remarkable properties similar to the CH equation, such as infinite number of conservation laws and integrability,  but these two equations are quite different. It is worth noting that the DP equation has not only peakon solutions and periodic peakon solutions but also shock peakons and periodic shock waves \cite{dhh,ely2,lu}. On the other hand, the Cauchy problems of the DP equation has been studied extensively. The local well-posedness, ill-posedness, global strong solutions and several kinds of blow-up solutions were investigated in \cite{ck,ely2,gl,glmy,ly2,ly3,y5,y6}.

Although the $b$-family of equations are not  completely integrable for $b\neq0$ except for $b=2$ or $b=3,$ it has the following important conservation laws
\begin{align*}
H_{0}=\int_{\mathbb{R}}m\ud x,\quad H_{1}=\int_{\mathbb{R}}m^{\frac{1}{b}}\ud x,\quad H_{2}=\int_{\mathbb{R}}m_{x}m^{-2-\frac{1}{b}}+b^{2}m^{-\frac{1}{b}}\ud x.
\end{align*}
By the above conservation laws, most of the analytical properties of the $b$-family equation can be extracted from the CH and DP equations. Holm-Staley \cite{hs1,hs2} studied the peakon solutions of the $b$-equation for any $b\in\mathbb{R}.$ For $s>\frac32,$ Escher-Yin\cite{ey1} proved that the $b$-equation is locally well-posed in Sobolev spaces $H^{s}(\mathbb{R})$ and has global solutions for $0\leq b\leq1$ or $b=-\frac{1}{2n}.$ When $b>1,$ they also proved that the solution to \eqref{b} blows up in finite time if the initial data satisfies $m_{0}(x)\geq0,\ x\leq0$ or $m_{0}(x)\leq0,\ x\geq0.$ In \cite{glt}, Gui-Liu-Tian also obtained the blow-up phenonmena and global existence of solutions to \eqref{b} in $H^{s}(\mathbb{R})\big(s>\frac32\big)$ under various classes of initial data and the value of $b.$ Later, Lv-Wang\cite{lw} showed a new precise blow-up scenario and obtained some new blow-up results for the $b$-equation in $H^{s}(\mathbb{R})$ with $s>\frac32.$ However, for $s<\frac32,$ Himonas-Grayshan-Holliman \cite{hgh} studied the ill-posedness of the $b$-equations in $H^{s}(\mathbb{R}).$ Recently, for $s=\frac32,$ Guo-Liu-Molinet-Yin \cite{glmy} proved that the $b$-equation is ill-posed in critical Sobolev space $H^{\frac32}(\mathbb{R}).$ We summarize the above results in the following table: Table 1.
\begin{table}[htbp] 
\centering\caption{\label{tab}The global existence, blowup phenomena and ill-posedness for the $b-$equation}
\begin{tabular}{ccc}
\toprule
\text{The value of $b$}& Results & Reference \\
\midrule
\text{$0\leq b\leq1$} & Global solution & \cite{ey1} \\
\text{$b>1$} & Global solution, blow-up solution, ill-posedness & \cite{ey1}, \cite{glt}, \cite{lw}, \cite{hgh}, \cite{glmy} \\
\text{$b<-1$} & There are no results yet & ? \\
\text{$b=-\frac{1}{2n}$} & Global solution & \cite{ey1} \\
\bottomrule
\end{tabular}
\end{table}

In this paper, the first aim is to investigate the global existence of solutions for the $b$-equations to better understand the properties of the $b$-equations. We prove that the corresponding solution to \eqref{bb} is global in time with the initial data $u_{0}(x)\in H^{s}(\mathbb{R})(s>\frac32)$ and $ m_{0}(x)\in L^{1}(\mathbb{R})$ satisfying $m_{0}\geq0,x\leq x_{0}\big(\text{or}\ m_{0}\leq0,x\leq x_{0}\big)$ when $b\geq1$ or $m_{0}\geq0,x\geq x_{0}\big(\text{or}\ m_{0}\leq0,x\geq x_{0}\big)$ when $b\leq1.$ Comparing with the previous results in \cite{ey1,glt}, our result doesn't need the condition that $m_{0}(x)$ keeps its sign. This implies that our result is a new and interesting one.

For $b>1,$ it was shown (see \cite{hgh,glmy}) that the Cauchy problem of the $b$-equation is ill-posed in Sobolev space $H^{s}(\mathbb{R})$ for $s<\frac32$ or $s=\frac32$ by proving the norm inflation. However, for $b<-1,$ we see from Table 1 that there are no results about the $b$-equation yet.  Therefore, the second aim of this paper is to study the ill posedness of the $b$-equation in $H^{\frac32}(\mathbb{R})$ for $b<-1.$ 
Inspired by the idea of \cite{glmy}, we prove that the Cauchy problem of the $b$-equation is ill-posed in critical Besov space $B^{\frac{3}{2}}_{2,q}(\mathbb{R})$ with $1< q\leq\infty$ in the sense of norm inflation by constructing a class of special initial data for $b\neq1.$

Our main results are stated as follows.
\begin{theo}\label{global}
Suppose $u_{0}(x)\in H^{s}(\mathbb{R}),\ s>\frac{3}{2}$ and $m_{0}(x)=u_{0}(x)-u_{0xx}(x)\in L^{1}(\mathbb{R}).$ For $b\geq 1,$ if there is a $x_{0}\in\mathbb{R}$ such that
\begin{equation}\label{m}
\left\{\begin{array}{ll}
m_0(x)\geq0,&\qquad \text{if}\quad x\geq x_{0},\\
m_0(x)\leq0,&\qquad \text{if}\quad x\leq x_{0},
\end{array}\right.
\end{equation}
or for $b\leq1,$ if there exists a $x_{0}\in\mathbb{R}$ such that
\begin{equation}\label{m2}
\left\{\begin{array}{ll}
m_0(x)\geq0,&\qquad \text{if}\quad x\leq x_{0},\\
m_0(x)\leq0,&\qquad \text{if}\quad x\geq x_{0},
\end{array}\right.
\end{equation}
then the correspending solution to \eqref{bb} exists globally.
\end{theo}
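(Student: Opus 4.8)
The plan is to reduce global existence to a uniform-in-time bound on $\|u_x(t)\|_{L^\infty}$. By the local well-posedness in $H^s$ for $s>\frac32$ (see \cite{ey1}) there is a unique maximal solution on $[0,T^*)$, and by the blow-up criterion for \eqref{bb} (see \cite{ey1,lw}) it suffices to show that $\int_0^{T}\|u_x(\tau)\|_{L^\infty}\,\ud\tau<\infty$ for every $T<\infty$. Since $u=p\ast m$ with $p(x)=\frac12 e^{-|x|}$, Young's inequality gives $\|u_x(t)\|_{L^\infty}=\|p'\ast m(t)\|_{L^\infty}\le\frac12\|m(t)\|_{L^1}$, so the whole problem comes down to controlling $\|m(t)\|_{L^1}$.

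First I would track the sign of $m$ along the flow. Let $q(t,x)$ solve $q_t=u(t,q)$, $q(0,x)=x$; differentiating shows $q_x=\exp\!\big(\int_0^t u_x(\tau,q)\,\ud\tau\big)>0$, and a direct computation using \eqref{b} gives $\frac{\ud}{\ud t}\big[m(t,q)q_x^{\,b}\big]=0$, so $m(t,q(t,x))q_x^{\,b}=m_0(x)$. Because $q_x>0$, the sign of $m(t,\cdot)$ at $q(t,x)$ equals the sign of $m_0(x)$; hence under \eqref{m} (for $b\ge1$) the function $m(t,\cdot)$ is $\ge0$ to the right of $\xi(t):=q(t,x_0)$ and $\le0$ to its left, while under \eqref{m2} (for $b\le1$) the signs are reversed.

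Next I would differentiate $\|m(t)\|_{L^1}=\int_{\xi}^{\infty}|m|\,\ud y+\int_{-\infty}^{\xi}|m|\,\ud y$, splitting the line at $\xi(t)$ and using $m_t=-(um)_x-(b-1)u_xm$. The moving-boundary contributions produced by $\xi'(t)=u(t,\xi)$ cancel exactly against the boundary terms coming from integrating $(um)_x$, and the conservation identity $\int_{\mathbb{R}}u_xm\,\ud y=\int_{\mathbb{R}}(uu_x-u_xu_{xx})\,\ud y=0$ collapses everything to $\frac{\ud}{\ud t}\|m(t)\|_{L^1}=-2(b-1)\int_{\xi}^{\infty}u_xm\,\ud y$ in the first case (and the mirror expression over $\int_{-\infty}^{\xi}$ in the second). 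The key algebraic step is the identity, obtained by writing $m=u-u_{xx}$ and integrating by parts,
\begin{equation*}
\int_{\xi}^{\infty}u_xm\,\ud y=\tfrac12\big(u_x(\xi)^2-u(\xi)^2\big),
\end{equation*}
together with the pointwise relations $u(\xi)+u_x(\xi)=e^{\xi}\int_{\xi}^{\infty}e^{-z}m\,\ud z$ and $u(\xi)-u_x(\xi)=e^{-\xi}\int_{-\infty}^{\xi}e^{z}m\,\ud z$. Under \eqref{m} these give $u_x(\xi)+u(\xi)\ge0$ and $u_x(\xi)-u(\xi)\ge0$, hence $u_x(\xi)^2\ge u(\xi)^2$ and, since $b\ge1$, $\frac{\ud}{\ud t}\|m(t)\|_{L^1}\le0$; the case \eqref{m2} with $b\le1$ is entirely symmetric. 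Thus $\|m(t)\|_{L^1}\le\|m_0\|_{L^1}$ for all $t$, whence $\|u_x(t)\|_{L^\infty}\le\frac12\|m_0\|_{L^1}$, the blow-up criterion is never triggered, and $T^*=\infty$.

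The main obstacle I anticipate is analytic rather than algebraic: justifying the integrations by parts, the vanishing of the boundary terms $u(\xi)m(\xi)$ and $(um)(\pm\infty)$, and the differentiation of $\|m(t)\|_{L^1}$ under the integral when $m(t,\cdot)$ lies only in $H^{s-2}\cap L^1$ and its sign change sits at the moving point $\xi(t)$. I would handle this by first proving the estimate for smooth, fast-decaying initial data, for which all the manipulations above are legitimate and $q(t,\cdot)$ is a genuine diffeomorphism, and then passing to the general data in the class $\{u_0\in H^s:\ m_0\in L^1\}$ by means of the continuity of the data-to-solution map from \cite{ey1} together with a density argument.
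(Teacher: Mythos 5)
Your proposal is correct and follows essentially the same route as the paper: propagate the sign condition via the identity $m(t,y(t,x))\,y_x^{\,b}=m_0(x)$, split $\|m(t)\|_{L^1}$ at the moving point $y(t,x_0)$, reduce $\frac{\ud}{\ud t}\|m\|_{L^1}$ to the sign of $(b-1)\big(u_x^2-u^2\big)$ at that point via the convolution formulas for $u\pm u_x$, and conclude $\|m(t)\|_{L^1}\le\|m_0\|_{L^1}$, which bounds $\|u_x\|_{L^\infty}$ and rules out blow-up. The only cosmetic difference is that you invoke Young's inequality for $p'\ast m$ where the paper uses the embedding $W^{2,1}\hookrightarrow\mathcal{C}^1_B$; both arguments close the same way.
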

\begin{rema}\label{rm1}
Theorem \ref{global} doesn't need the condition that the initial data $m_{0}(x)$ keeps its sign, which is a new result.
\end{rema}
\begin{theo}\label{ill}
Let $b\neq 1$ and $1<q\leq+\infty.$ For any $N\in\mathbb{N}^{+}$ large enough, there exists a $u_{0}\in \mathcal{C}^{\infty}(\mathbb{R})$ such that the following hold:

{\rm(1)} $\|u_{0}\|_{B^{\frac{3}{2}}_{2,q}}\leq\frac{1}{\ln N}\rightarrow0,\quad \text{as}\quad N\rightarrow +\infty;$

{\rm(2)} There is a unique solution $u\in \mathcal{C}\big([0,T);\mathcal{C}^{\infty}(\mathbb{R})\big)$ to the Cauchy problem \eqref{bb} with a time $T\leq\frac{1}{\ln N};$

{\rm(3)} $\|u\|_{L^{\infty}\big(0,T;B^{\frac{3}{2}}_{2,q}\big)}\geq\ln N.$
\end{theo}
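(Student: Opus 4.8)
\emph{My plan is to exhibit explicit smooth data whose $B^{\frac32}_{2,q}$-norm is tiny but whose nonlinear evolution pumps, in a very short time, a large amount of mass into a single high-frequency band} (in the spirit of \cite{glmy}). \textbf{Step 1 (the data).} I would take a superposition of high-frequency wave packets $u_{0,N}(x)=\gamma\,\phi(x)\sum_{j}a_j\cos(\lambda_j x)$ with a fixed Schwartz envelope $\phi$, lacunary frequencies $\lambda_j\sim N$, a small amplitude $\gamma=\gamma(N)$ and weights $a_j$. Since the packets sit in (essentially) disjoint Littlewood--Paley blocks, the $B^{\frac32}_{2,q}$-norm is the corresponding $\ell^q$-sum of the block weights $\lambda_j^{3/2}\gamma a_j$, and I would tune $\gamma$ (and the number of packets, allowed to grow slowly with $N$) so that this sum is $\le(\ln N)^{-1}$, giving (1). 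The restriction $q>1$ enters here: $B^{\frac32}_{2,1}$ is the borderline space in which \eqref{bb} is locally well posed, and for $q>1$ the norm is strictly weaker, which is what leaves room between the small data and the large output.

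\textbf{Step 2 (smooth solution and lifespan).} As $u_{0,N}\in H^\infty(\mathbb{R})$, the local well-posedness of \eqref{bb} in $H^s$, $s>\tfrac32$, of Escher--Yin \cite{ey1}, together with persistence of regularity, produces a unique maximal solution $u\in\mathcal{C}([0,T_N);\mathcal{C}^\infty(\mathbb{R}))$. The amplitudes in Step 1 would be kept small enough that the blow-up--controlling quantity $\int_0^t\|u_x(\tau)\|_{L^\infty}\,d\tau$ (equivalently a bound on $\|m(\tau)\|$) stays finite beyond time $(\ln N)^{-1}$; hence $T_N>(\ln N)^{-1}$, and we may take $T=(\ln N)^{-1}$, yielding (2).

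\textbf{Step 3 (the resonant term and the role of $b\neq1$).} Expanding $u=u_0+tP+R$ with $P=-u_0\partial_x u_0-\partial_x(1-\partial_{xx})^{-1}\big(\tfrac b2 u_0^2+\tfrac{b-3}2 u_{0x}^2\big)$, I would isolate the part of $P$ landing in one chosen target band. With the frequencies arranged so that many admissible interactions deposit their output coherently in that band, the resonant contribution adds up rather than cancels. A direct (if lengthy) computation of its coefficient should reveal it to be a nonzero multiple of $b-1$, which is precisely where $b\neq1$ is used; heuristically $b=1$ is degenerate because then the momentum obeys the conservation law $m_t+(um)_x=0$, suppressing the growth. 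The upshot is a lower bound $\|tP\|_{B^{\frac32}_{2,q}}\gtrsim\ln N$ at $t=T$.

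\textbf{Step 4 (remainder and conclusion).} Finally I would bound $R=u-u_0-tP$ and all off-resonant output (the $2\lambda_j$ and difference-frequency modes) in $B^{\frac32}_{2,q}$ on $[0,T]$ through the Duhamel form of \eqref{bb}, using the quantitative $H^s$ bounds of Step 2 to absorb the (order $-1$, hence smoothing) nonlocal terms and to keep these pieces below the main term for $t\le T$. Then $\|u(T)\|_{B^{\frac32}_{2,q}}\ge\|TP\|_{B^{\frac32}_{2,q}}-\|u_0\|_{B^{\frac32}_{2,q}}-\|R\|_{B^{\frac32}_{2,q}}\ge\ln N$, which is (3). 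The main obstacle is exactly this last step together with the coherence claim of Step 3: one must prove, uniformly in $N$ and at the scaling-critical regularity $s=\tfrac32$---where no Lipschitz or contraction estimate for \eqref{bb} is available---that only the designed interaction survives while the genuine nonlinear remainder and every other interaction stay strictly smaller in the $\ell^q$-based norm.
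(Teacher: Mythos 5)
Your outline follows the Bourgain--Pavlovi\'c first-iterate strategy (small wave-packet data, resonant growth of the first Picard iterate $tP$, perturbative control of the remainder $R$), which is genuinely different from what the paper does --- and the two places where you yourself locate ``the main obstacle'' (Steps 3 and 4) are exactly where the argument breaks down for this equation. The $b$-equation \eqref{bb} is quasilinear: the term $uu_x$ loses a full derivative, the nonlocal terms are only order $-1$ corrections to it, and at the critical index $s=\tfrac32$ there is no a priori estimate, no contraction, and no smoothing with which to bound $R=u-u_0-tP$ in $B^{\frac32}_{2,q}$ by Duhamel iteration. Declaring that the remainder ``stays below the main term'' is not a step one can defer here; it is the whole problem, and it is precisely why the critical ill-posedness results for CH-type equations (\cite{glmy}, which this paper follows) do not use the first-iterate method. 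There are also internal inconsistencies in your setup: if all frequencies satisfy $\lambda_j\sim N$ the packets occupy essentially one Littlewood--Paley block, so the $\ell^q$ versus $\ell^1$ gap you invoke to reconcile small $B^{\frac32}_{2,q}$ data with large output is not available; and your Step 2 claim that the lifespan exceeds $(\ln N)^{-1}$ is unsubstantiated (in the paper's construction the solution in fact blows up \emph{before} $T_1=\tfrac{1}{\ln N}$, and that blow-up is the mechanism of the proof, not an obstacle to it).

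The paper's actual argument is non-perturbative and much shorter. It takes the odd data $u_0=-\sum_{n=2}^{N}\frac{(\ln N)^{-1}}{2^{2n}n^{2/(1+q)}}h_n$ with $\hat h_n(\xi)=i2^{-n}\xi\tilde\varphi(2^{-n}\xi)$: spreading the mass over $N$ dyadic blocks makes $\|u_0\|_{B^{3/2}_{2,q}}\le (\ln N)^{-1}$ for $q>1$ (an $\ell^q$ sum), while the slope at the origin, being an $\ell^1$-type sum, satisfies $u_{0x}(0)\ge N^{\frac{q-1}{q+1}}$. Oddness is preserved by the flow, so $u(t,0)=u_{xx}(t,0)=0$, and evaluating the $x$-derivative of \eqref{bb} at $x=0$ gives the Riccati inequality
\begin{align*}
\frac{\ud}{\ud t}u_x(t,0)\ \ge\ \frac{1-b}{2}\,u_x^2(t,0)-C\,\|u(t)\|_{B^{3/2}_{2,q}}^{2},
\end{align*}
where your heuristic about $b=1$ being degenerate does correctly identify the role of the coefficient $\tfrac{1-b}{2}$ (with the sign of the data flipped when $b>1$). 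If one assumes, for contradiction, that $\|u\|_{L^\infty(0,T_1;B^{3/2}_{2,q})}\le\ln N$ with $T_1=\tfrac1{\ln N}$, the enormous initial slope forces $u_x(t,0)$ to blow up before $T_1$, contradicting the blow-up criterion of Lemma \ref{blowup}. This sidesteps every remainder estimate your plan requires. If you want to salvage your proposal you would need to supply, at minimum, a quantitative bound on $\|R(t)\|_{B^{3/2}_{2,q}}$ for $t\le(\ln N)^{-1}$ that is uniform in $N$; no such bound is currently known at $s=\tfrac32$, so as written the proposal does not constitute a proof.
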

\begin{rema}\label{rm2}
For $q=2,$ we see that Theorem \ref{ill} implies that the ill-posedness in the critical Sobolev space $H^{\frac32}.$
\end{rema}

The rest of our paper is as follows. In Section 2, we introduce some preliminaries which will be used in the sequel. In Section 3 and Section 4, we give the proof of our main results for the $b$-equation.

\section{Preliminaries}
\par
In this section, we first recall some basic properties on the Littlewood-Paley theory, which can be found in \cite{book}.

Let $\chi$ and $\varphi$ be a radical, smooth and valued in the interval $[0,1]$, belonging respectively to $\mathcal{D}(\mathfrak{B})$ and $\mathcal{D}(\mathfrak{C})$, where $\mathfrak{B}=\{\xi\in\mathbb{R}^d:|\xi|\leq\frac 4 3\},\ \mathfrak{C}=\{\xi\in\mathbb{R}^d:\frac 3 4\leq|\xi|\leq\frac 8 3\}$.
For all $f\in\mathcal{S}^{'},$ the Fourier transform $\mathcal{F}$ and its inverse $\mathcal{F}^{-1}$ are defined by
\begin{align*}
&\big(\mathcal{F}f\big)(\xi)=\hat{f}(\xi)=\int_{\mathbb{R}}e^{-ix\xi}f(x)\ud x,\\
&\big(\mathcal{F}^{-1}f\big)(x)=\check{f}(x)=\int_{\mathbb{R}}e^{-ix\xi}f(\xi)\ud\xi.
\end{align*}
The nonhomogeneous dyadic blocks $\Delta_j$ and low-frequency cut-off operators $S_j$ are defined as
\begin{align*}
&h=\mathcal{F}^{-1}\varphi,\quad \tilde{h}=\mathcal{F}^{-1}\chi,\\
&\Delta_j u=0\quad \text{if\quad $j\leq -2$,}\quad
\Delta_{-1} u=\chi(D)u=\int_{\mathbb{R}^{d}}\tilde{h}(y)u(x-y)\ud y,\\
&\Delta_j u=\varphi(2^{-j}D)u=2^{jd}\int_{\mathbb{R}^{d}}h(2^{jd}y)u(x-y)\ud y\quad \text{if\quad $j\geq0$,}\\
&S_j u=\sum\limits_{j'=-1}^{j}\Delta_{j'}u=\mathcal{F}^{-1}\Big(\chi\big(2^{-(j+1)}\xi\big)\mathcal{F}u\Big).
\end{align*}

Let $s\in\mathbb{R}$ and $1\leq p,\ r\leq\infty.$ The nonhomogeneous Besov space $B^s_{p,r}(\mathbb{R}^d)$ is defined by
\begin{align*}
B^s_{p,r}=B^s_{p,r}(\mathbb{R}^d)=\Big\{u\in \mathcal{S}'(\mathbb{R}^d):\|u\|_{B^s_{p,r}}=\big\|(2^{js}\|\Delta_j u\|_{L^p})_j \big\|_{l^r(\mathbb{Z})}<\infty\Big\}.
\end{align*}

The corresponding nonhomogeneous Sobolev space $H^{s}(\mathbb{R}^d)$ is
$$H^{s}=H^{s}(\mathbb{R}^d)=\Big\{u\in \mathcal{S}'(\mathbb{R}^d):\ u\in L^2_{loc}(\mathbb{R}^d),\ \|u\|^2_{H^s}=\int_{\mathbb{R}^d}(1+|\xi|^2)^s|\mathcal{F}u(\xi)|^2\ud\xi<\infty\Big\}.$$

We introduce some properties about Besov spaces. For more details, see \cite{book}.
\begin{prop}[See \cite{book}]\label{Besov}
Let $s\in\mathbb{R}$ and $1\leq p,\ r\leq\infty.$ \\
{\rm(1)} $B^s_{p,r}$ is a Banach space, and is continuously embedded in $\mathcal{S}'.$\\
{\rm(2)} For $p=2,\ r=2,$  the Besov space $B^{s}_{2,2}$ coincides with the Sobolev space $H^{s}.$\\
{\rm(3)} If $p_1\leq p_2$ and $r_1\leq r_2$, then $ B^s_{p_1,r_1}\hookrightarrow B^{s-(\frac 1 {p_1}-\frac 1 {p_2})}_{p_2,r_2}. $
If $s_1<s_2$, then the embedding $B^{s_2}_{p,r_2}\hookrightarrow B^{s_1}_{p,r_1}$ is locally compact.\\
{\rm(4)} If $r<\infty,$ then $\lim\limits_{j\rightarrow\infty}\|S_j u-u\|_{B^s_{p,r}}=0.$ If $p,\ r<\infty,$ then $\mathcal{C}_0^{\infty}$ is dense in $B^s_{p,r}.$\\
{\rm(5)} For $s\in\mathbb{R}^{+}\backslash\mathbb{N},$ the space $B^{s}_{\infty,\infty}$ coincides with the
H\"{o}lder space $\mathcal{C}^{[s],s-[s]};$ For $s\in\mathbb{N},$ the space 
$B^{s}_{\infty,\infty}$ is strictly
larger than the space $\mathcal{C}^{s}$ $($and than $\mathcal{C}^{s-1,1}$ if $s\in\mathbb{N}^{*}$$).$
\end{prop}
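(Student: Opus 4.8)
The plan is to derive all five statements from the elementary mapping properties of the dyadic blocks $\Delta_j$, together with Bernstein's inequalities, Plancherel's theorem, and the almost-orthogonality relation $\Delta_j\Delta_{j'}=0$ whenever $|j-j'|\geq 2$ (which follows from the annular support of $\varphi$). First I would record Bernstein's inequalities: if $\mathrm{supp}\,\hat u\subset 2^j\mathfrak{C}$ (or $2^j\mathfrak{B}$), then $\|\partial^\alpha u\|_{L^p}\lesssim 2^{j|\alpha|}\|u\|_{L^p}$ and, for $p_1\leq p_2$, $\|u\|_{L^{p_2}}\lesssim 2^{jd(1/p_1-1/p_2)}\|u\|_{L^{p_1}}$; both reduce every frequency-localized $L^p$ estimate to a Young convolution bound against $2^{jd}h(2^j\cdot)$, whose $L^1$ norm is scale-invariant.

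For (1), the continuous embedding into $\mathcal{S}'$ comes from writing $u=\sum_{j\geq -1}\Delta_j u$ and estimating $\langle u,\phi\rangle$ for $\phi\in\mathcal{S}$ by pairing each block with $\phi$; the spatial localization of $\Delta_j u$ and the Schwartz decay of $\phi$ force the series to converge with a bound by finitely many seminorms of $\phi$ times $\|u\|_{B^s_{p,r}}$. For completeness I would take a Cauchy sequence $(u_n)$, observe that $(\Delta_j u_n)_n$ is Cauchy in $L^p$ for each fixed $j$ with limit $v_j$, set $u=\sum_j v_j$ (convergent in $\mathcal{S}'$ by the previous estimate), and use $\Delta_j\Delta_{j'}=0$ for $|j-j'|\geq 2$ to verify $\Delta_j u=v_j$; lower semicontinuity of the $\ell^r(L^p)$ norm under the blockwise $L^p$ convergence then yields $u_n\to u$ in $B^s_{p,r}$. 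Statement (2) is immediate from Plancherel: $\|\Delta_j u\|_{L^2}^2=\int|\varphi(2^{-j}\xi)|^2|\hat u(\xi)|^2\,\ud\xi$, and since $|\xi|\sim 2^j$ on $\mathrm{supp}\,\varphi(2^{-j}\cdot)$ one has $2^{2js}\sim(1+|\xi|^2)^s$ there, while $\sum_j|\varphi(2^{-j}\xi)|^2$ is bounded above and below; summing in $j$ reconstructs the $H^s$ norm.

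For (3), the first embedding combines Bernstein with the trivial inclusion $\ell^{r_1}\hookrightarrow\ell^{r_2}$ for $r_1\leq r_2$: absorbing the gain $2^{jd(1/p_1-1/p_2)}$ into the dyadic weight converts the $B^s_{p_1,r_1}$ norm into the $B^{s-d(1/p_1-1/p_2)}_{p_2,r_2}$ norm (in $d=1$ this is the exponent written in the statement). The local compactness for $s_1<s_2$ I would obtain by splitting at frequency $2^J$: the low part $S_J u$ is Fourier-supported in a fixed ball, hence precompact on bounded sets by Rellich, while the tail satisfies $\|u-S_J u\|_{B^{s_1}_{p,r_1}}\lesssim 2^{-J(s_2-s_1)}\|u\|_{B^{s_2}_{p,r_2}}$, uniformly small over a bounded set. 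For (4), the identity $u-S_j u=\sum_{j'>j}\Delta_{j'}u$ gives $\|u-S_j u\|_{B^s_{p,r}}^r=\sum_{j'>j}2^{j'sr}\|\Delta_{j'}u\|_{L^p}^r$, the tail of a convergent series when $r<\infty$, hence $\to 0$; density of $\mathcal{C}_0^\infty$ then follows by mollifying and spatially truncating each band-limited $S_j u$ (an $L^p$ function with $p<\infty$).

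The last statement (5), the Hölder--Zygmund identification, is where I expect the real work. For $0<s<1$ I would establish the two-sided estimate $\|u\|_{\mathcal{C}^{0,s}}\sim\sup_j 2^{js}\|\Delta_j u\|_{L^\infty}$: the lower bound uses $|u(x)-u(y)|\leq\sum_j|\Delta_j u(x)-\Delta_j u(y)|$ with $|\Delta_j u(x)-\Delta_j u(y)|\leq\min\!\big(2\|\Delta_j u\|_{L^\infty},\,|x-y|\,\|\partial_x\Delta_j u\|_{L^\infty}\big)$ and Bernstein, summed as a geometric-type series; the reverse bound controls $\|\Delta_j u\|_{L^\infty}$ by the Hölder seminorm via the cancellation $\int h=0$ valid for $j\geq 0$. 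For general non-integer $s=[s]+\sigma$ I would iterate, using that $\partial_x$ maps $B^s_{\infty,\infty}$ into $B^{s-1}_{\infty,\infty}$, to reduce to $0<\sigma<1$. Finally, the \emph{strict} inclusion at integer $s$ demands an explicit counterexample: a lacunary series such as $\sum_k 2^{-ks}\cos(2^k x)$ has $2^{js}\|\Delta_j u\|_{L^\infty}\sim 1$, so it lies in $B^s_{\infty,\infty}$, yet for $s=1$ it is the classical Zygmund function that fails to be Lipschitz, hence is not in $\mathcal{C}^s$ nor $\mathcal{C}^{s-1,1}$. This counterexample, together with the two-sided Hölder characterization, is the one genuinely non-formal ingredient and the main obstacle of the proposition.
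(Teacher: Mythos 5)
The paper offers no proof of this proposition at all: it is quoted verbatim from Bahouri--Chemin--Danchin \cite{book} as a collection of standard facts. Your sketch correctly reconstructs the standard arguments given in that reference (Bernstein plus almost-orthogonality for the embeddings and completeness, Plancherel for $B^s_{2,2}=H^s$, frequency splitting for local compactness and for $S_ju\to u$, the two-sided dyadic characterization of H\"older norms, and a lacunary Weierstrass-type series for strictness at integer regularity), so it is consistent with the proof the paper implicitly relies on.
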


\section{The proof of Theorem \ref{global}}
\par
Before giving our global existence of the strong solutions to \eqref{bb}, let's first recall the local well-posedness and blow-up scenario of  the strong solutions.
\begin{lemm}[See \cite{ey1,glt}]\label{local}
Suppose $u_{0}\in H^{s}(\mathbb{R})$ with $s>\frac{3}{2}.$ Then there exist a maximal existence time $T=T(u_{0})>0$ and a unique solution $u$ to \eqref{bb} such that
\begin{align*}
u=u(\cdot,u_{0})\in \mathcal{C}\big([0,T];H^{s}\big)\cap \mathcal{C}^{1}\big([0,T];H^{s-1}\big)
\end{align*}
Moreover, the solution depends continuously on the initial data, that is the mapping $u_{0}\mapsto u: H^{s}\rightarrow \mathcal{C}\big([0,T];H^{s}\big)\cap \mathcal{C}^{1}\big([0,T];H^{s-1}\big)$ is continuous.
\end{lemm}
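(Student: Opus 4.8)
The plan is to treat \eqref{bb} as a transport equation $\partial_t u + u\,\partial_x u = F(u)$ whose source term
$$F(u) = -\partial_x(1-\partial_{xx})^{-1}\Big(\tfrac b2 u^2 + \tfrac{b-3}2 u_x^2\Big)$$
is nonlocal but one derivative smoother than $u$, and to combine the Friedrichs iteration scheme with the linear transport estimates from \cite{book}. First I would set up the approximate sequence by solving the linear problems $\partial_t u^{n+1} + u^n\,\partial_x u^{n+1} = F(u^n)$ with data $u^{n+1}(0)=S_{n+1}u_0$ and $u^0=0$. The key input is the a priori estimate for the linear transport equation in $H^s=B^s_{2,2}$: if $\partial_t f + v\,\partial_x f = g$, then
$$\|f(t)\|_{H^s} \le e^{C\int_0^t \|\partial_x v(\tau)\|_{H^{s-1}}\,\ud\tau}\Big(\|f(0)\|_{H^s} + \int_0^t \|g(\tau)\|_{H^s}\,\ud\tau\Big),$$
which holds for $s>\frac32$ precisely because $H^{s-1}\hookrightarrow L^\infty$ then controls the commutator terms.

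Second, the crucial nonlinear estimate is that $F$ gains a derivative: since $\partial_x(1-\partial_{xx})^{-1}$ maps $H^{s-1}$ into $H^s$, and since $H^{s-1}$ is a Banach algebra for $s-1>\frac12$, the Moser-type product estimates give $\|F(u)\|_{H^s} \lesssim \|u^2\|_{H^{s-1}} + \|u_x^2\|_{H^{s-1}} \lesssim \|u\|_{H^s}^2$. Feeding this into the transport estimate and arguing by induction yields a uniform bound $\|u^n\|_{L^\infty(0,T;H^s)}\le 2\|u_0\|_{H^s}$ on a common interval $[0,T]$ with $T\sim\|u_0\|_{H^s}^{-1}$; a standard continuation argument then produces the maximal existence time $T=T(u_0)$.

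Third, I would establish convergence of $\{u^n\}$ in the lower-regularity space $\mathcal{C}([0,T];H^{s-1})$ by applying the same transport bound at level $s-1$ to the differences $u^{n+1}-u^n$; the uniform $H^s$-bound controls the right-hand sides and yields a geometric contraction. Interpolating between the uniform $L^\infty_T H^s$ bound and the $\mathcal{C}_T H^{s-1}$ convergence gives a limit $u$ solving \eqref{bb} with $u\in L^\infty(0,T;H^s)\cap \mathcal{C}([0,T];H^{s-1})$; since $u_t=-u u_x+F(u)\in\mathcal{C}([0,T];H^{s-1})$, we also get $u\in\mathcal{C}^1([0,T];H^{s-1})$. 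Upgrading the time regularity from $L^\infty_T H^s$ (with $u\in\mathcal{C}_w([0,T];H^s)$) to $\mathcal{C}([0,T];H^s)$ follows from the classical continuity-of-the-$H^s$-norm argument for transport equations, valid since $r=2<\infty$. Uniqueness is then obtained by writing the equation for $w=u-v$ and running an energy estimate at level $H^{s-1}$ (or $L^2$): the commutator terms together with a Lipschitz-type bound of the form $\|F(u)-F(v)\|_{H^{s-1}}\lesssim\big(\|u\|_{H^s}+\|v\|_{H^s}\big)\|w\|_{H^{s-1}}$ close a Gronwall inequality, forcing $w\equiv0$.

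Finally, for continuous dependence I expect the Bona--Smith technique to be the main obstacle, since the solution map is continuous but not uniformly continuous at this regularity. I would approximate the data by the mollifications $S_n u_0$, use the extra smoothness to control the high frequencies of the difference of solutions at level $H^s$ while invoking the already-established $H^{s-1}$ stability for the low frequencies, and then let $n\to\infty$. Combining these two regimes with the uniform bounds from the second step yields the continuity of the map $u_0\mapsto u$ from $H^s$ into $\mathcal{C}([0,T];H^s)\cap\mathcal{C}^1([0,T];H^{s-1})$, which completes the proof.
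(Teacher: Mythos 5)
Your proposal is essentially correct, but note that the paper itself does not prove Lemma \ref{local} at all: it is quoted directly from \cite{ey1,glt}, and those references establish local well-posedness via Kato's semigroup theory for quasi-linear evolution equations of the form $\frac{\ud u}{\ud t}+A(u)u=f(u)$, taking $A(u)=u\partial_x$ and treating the nonlocal term $F(u)=-\partial_x(1-\partial_{xx})^{-1}\big(\tfrac b2 u^2+\tfrac{b-3}2 u_x^2\big)$ as the lower-order part. What you do instead is the Danchin-style Littlewood--Paley argument: a Friedrichs-type iteration on linear transport equations, the $H^s=B^s_{2,2}$ transport estimate with exponent $\int_0^t\|\partial_x v\|_{H^{s-1}}\,\ud\tau$ (valid since $s>\tfrac32=1+\tfrac12$), the key one-derivative gain of $\partial_x(1-\partial_{xx})^{-1}$ combined with the algebra property of $H^{s-1}$, contraction in the weaker norm $\mathcal{C}([0,T];H^{s-1})$, and Bona--Smith for continuous dependence. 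Your estimates check out: the tame product law you implicitly need for the Lipschitz bound on $F$ (namely $H^{s-1}\times H^{s-2}\to H^{s-2}$, which requires $s-1>\tfrac12$ and $2s-3>0$) is guaranteed by $s>\tfrac32$, and you rightly identify continuous dependence as the delicate step, since the data-to-solution map is continuous but not uniformly continuous at this regularity. Comparing the two routes: Kato's abstract theorem delivers existence, uniqueness and continuous dependence in one stroke once its structural hypotheses are verified, but it is tied to the Hilbert-space pair $(H^s,H^{s-1})$; your approach is longer but self-contained with exactly the Littlewood--Paley machinery the paper sets up in Section 2, and it generalizes verbatim to Besov spaces $B^s_{p,r}$ with $s>1+\tfrac1p$ (or $s=1+\tfrac1p$, $r=1$), which is the natural setting for the paper's ill-posedness result, Theorem \ref{ill}, in $B^{\frac32}_{2,q}$.
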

\begin{prop}[See \cite{ey1}]\label{en}
Let $u_{0}\in H^s(\mathbb{R})$ with $s>\frac 3 2.$ If $u \in C([0,T);H^s(\mathbb{R}))\cap C^1([0,T);H^{s-1}(\mathbb{R}))$ solves \eqref{bb}, then we have
\begin{align}\label{h}
\frac{\ud}{\ud t}\|u\|_{H^s}^2\leq C(\|u_x\|_{L^\infty}+1)\|u\|_{H^s}^2
\end{align}
for all $b\in\mathbb{R}.$
\end{prop}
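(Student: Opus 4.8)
The plan is to run a direct $H^{s}$ energy estimate, treating \eqref{bb} as a transport equation
\begin{equation*}
u_{t}+uu_{x}=R,\qquad R:=-\partial_{x}(1-\partial_{xx})^{-1}\Big(\tfrac{b}{2}u^{2}+\tfrac{b-3}{2}u_{x}^{2}\Big),
\end{equation*}
whose forcing term $R$ is nonlocal but of lower order. Writing $\Lambda^{s}=(1-\partial_{xx})^{s/2}$, I would apply $\Lambda^{s}$ to the equation and pair with $\Lambda^{s}u$ in $L^{2}$; since $\frac12\frac{\ud}{\ud t}\|u\|_{H^{s}}^{2}=\langle\Lambda^{s}u_{t},\Lambda^{s}u\rangle$, this yields
\begin{equation*}
\tfrac12\tfrac{\ud}{\ud t}\|u\|_{H^{s}}^{2}=-\big\langle\Lambda^{s}(uu_{x}),\Lambda^{s}u\big\rangle+\big\langle\Lambda^{s}R,\Lambda^{s}u\big\rangle,
\end{equation*}
and the two terms are estimated separately.

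For the transport term I would isolate the commutator,
\begin{equation*}
\big\langle\Lambda^{s}(uu_{x}),\Lambda^{s}u\big\rangle=\big\langle[\Lambda^{s},u]u_{x},\Lambda^{s}u\big\rangle+\big\langle u\,\partial_{x}\Lambda^{s}u,\Lambda^{s}u\big\rangle.
\end{equation*}
The second piece is integrated by parts via $\langle u\,\partial_{x}\Lambda^{s}u,\Lambda^{s}u\rangle=-\tfrac12\int u_{x}(\Lambda^{s}u)^{2}\,\ud x$, which is bounded by $\tfrac12\|u_{x}\|_{L^{\infty}}\|u\|_{H^{s}}^{2}$ --- this is precisely where the factor $\|u_{x}\|_{L^{\infty}}$ originates. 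The commutator piece is controlled by the standard Kato--Ponce/Besov commutator estimate (see \cite{book}), $\|[\Lambda^{s},u]u_{x}\|_{L^{2}}\le C\|u_{x}\|_{L^{\infty}}\|u\|_{H^{s}}$, so that Cauchy--Schwarz again gives a bound of the form $C\|u_{x}\|_{L^{\infty}}\|u\|_{H^{s}}^{2}$.

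For the nonlocal forcing I would use that $\partial_{x}(1-\partial_{xx})^{-1}$ is a Fourier multiplier of order $-1$, hence maps $H^{s-1}$ into $H^{s}$ boundedly; by Cauchy--Schwarz
\begin{equation*}
\big|\big\langle\Lambda^{s}R,\Lambda^{s}u\big\rangle\big|\le\|u\|_{H^{s}}\|R\|_{H^{s}}\le C\|u\|_{H^{s}}\Big\|\tfrac{b}{2}u^{2}+\tfrac{b-3}{2}u_{x}^{2}\Big\|_{H^{s-1}}.
\end{equation*}
Because $s>\tfrac32$ gives $s-1>\tfrac12$, the space $H^{s-1}$ is a Banach algebra, and the Moser-type product estimate (see \cite{book}) gives $\|u^{2}\|_{H^{s-1}}\le C\|u\|_{L^{\infty}}\|u\|_{H^{s-1}}$ and $\|u_{x}^{2}\|_{H^{s-1}}\le C\|u_{x}\|_{L^{\infty}}\|u_{x}\|_{H^{s-1}}$; the constants here absorb the fixed factors $|b|,|b-3|$, so the bound is valid for every $b\in\mathbb{R}$. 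Both terms are $\le C(\|u\|_{L^{\infty}}+\|u_{x}\|_{L^{\infty}})\|u\|_{H^{s}}$, so the forcing contributes at most $C(\|u\|_{L^{\infty}}+\|u_{x}\|_{L^{\infty}})\|u\|_{H^{s}}^{2}$.

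Adding the two bounds produces $\frac{\ud}{\ud t}\|u\|_{H^{s}}^{2}\le C(\|u\|_{L^{\infty}}+\|u_{x}\|_{L^{\infty}})\|u\|_{H^{s}}^{2}$; the stated inequality \eqref{h} then follows once $\|u\|_{L^{\infty}}$ is absorbed, which is legitimate because $u\in\mathcal{C}([0,T];H^{s})\hookrightarrow\mathcal{C}([0,T];L^{\infty})$ keeps $\|u\|_{L^{\infty}}$ finite on the existence interval, so it may be merged with the constant to give the harmless $+1$. The step I expect to be the main obstacle is the top-order bookkeeping: a naive estimate of $\langle\Lambda^{s}(uu_{x}),\Lambda^{s}u\rangle$ and of $\|u_{x}^{2}\|_{H^{s-1}}$ each threaten a loss of one full derivative, and the estimate closes only because (i) the commutator structure removes the loss in the transport term, and (ii) the order $-1$ smoothing of $\partial_{x}(1-\partial_{xx})^{-1}$ exactly offsets the derivative carried by $u_{x}^{2}$. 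A secondary technical point is that all the pairings and integrations by parts above are only formal at regularity $H^{s}$; to make them rigorous I would first establish the estimate for smooth approximate solutions (obtained by mollifying the data, or by the low-frequency cut-off $S_{j}$), where the manipulations are justified, and then pass to the limit using the uniform-in-parameter bounds.
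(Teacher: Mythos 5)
The paper does not actually prove this proposition --- it is imported verbatim from \cite{ey1} --- so there is no internal proof to compare against; your energy-method argument is the standard route that the cited reference follows. Up to the last step your computation is correct: the commutator splitting with Kato--Ponce and the integration by parts give $C\|u_x\|_{L^\infty}\|u\|_{H^s}^2$ for the transport term, the order $-1$ smoothing of $\partial_x(1-\partial_{xx})^{-1}$ combined with the tame product estimates handles the source term, and what you legitimately obtain is
\begin{equation*}
\frac{\ud}{\ud t}\|u\|_{H^s}^2\leq C\big(\|u\|_{L^\infty}+\|u_x\|_{L^\infty}\big)\|u\|_{H^s}^2 .
\end{equation*}

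The genuine gap is the final absorption of $\|u\|_{L^\infty}$ into the ``$+1$''. Your justification --- that $u\in\mathcal{C}([0,T);H^s)$ keeps $\|u\|_{L^\infty}$ finite on the existence interval --- fails on two counts. First, the interval in the statement is the half-open $[0,T)$ with $T$ possibly the maximal existence time, and $\sup_{t\in[0,T)}\|u(t)\|_{L^\infty}$ is not known to be finite a priori (for general $b$ there is no conserved $H^1$ norm to fall back on, unlike the Camassa--Holm case). Second, even where the supremum is finite, merging it into $C$ makes the constant depend on the solution itself; the whole point of \eqref{h} is to feed Gronwall's inequality and conclude (as in Lemma \ref{blowup} and the proof of Theorem \ref{ill}) that control of $\int_0^T\|u_x\|_{L^\infty}\,\ud t$ prevents blow-up of $\|u\|_{H^s}$, and that deduction is circular if $C$ already encodes a uniform bound on the solution over $[0,T)$. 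To close the argument you must either keep the estimate in the form displayed above and supply a separate a priori bound for $\|u\|_{L^\infty}$ (e.g.\ a differential inequality for a lower-order norm, integrated before Gronwall is applied to \eqref{h}), or accept that the constant in \eqref{h} depends on such a lower-order quantity --- a soft ``it is continuous, hence bounded'' remark does not do the job. The rest of the write-up, including the closing comment on regularizing to justify the formal pairings, is fine.
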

\begin{lemm}\label{blowup}
Let $u_{0}\in H^{s}(\mathbb{R})$ with $s>\frac{3}{2}.$ Assume that $T>0$ is the maximal existence time of the corresponding solution $u$ to \eqref{bb} with the initial data $u_{0}.$ If $T$ is finite, then we have 
\begin{align*}
\int_{0}^{T}\|\partial_{x}u(t)\|_{L^{\infty}}\ud t<+\infty\quad\text{and}\quad\int_{0}^{T}\|u(t)\|_{B^{1}_{\infty,\infty}}\ud t<+\infty.
\end{align*} 
\end{lemm}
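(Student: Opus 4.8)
The plan is to deduce both integral bounds directly from the local regularity furnished by Lemma \ref{local} together with the embedding properties in Proposition \ref{Besov}, rather than from any quantitative blow-up analysis. The key structural observation is that Lemma \ref{local} places the solution in $\mathcal{C}([0,T];H^s)$ on the \emph{closed} interval $[0,T]$; hence the scalar function $t\mapsto\|u(t)\|_{H^s}$ is continuous on the compact set $[0,T]$ and therefore attains a finite maximum $M:=\max_{t\in[0,T]}\|u(t)\|_{H^s}<+\infty$. Once this is in place, the whole lemma reduces to bounding the two integrand norms pointwise by $\|u(t)\|_{H^s}$.

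First I would record the two embeddings that accomplish this, both valid precisely because $s>\tfrac32$. For the first integrand, the Sobolev embedding $H^s(\mathbb{R})\hookrightarrow W^{1,\infty}(\mathbb{R})$, which requires exactly $s-1>\tfrac12$ so that $\partial_x u\in H^{s-1}\hookrightarrow L^\infty$, gives $\|\partial_x u(t)\|_{L^\infty}\leq C\|u(t)\|_{H^s}$. For the second integrand I would chain the inclusions of Proposition \ref{Besov}(3): taking $p_1=2,\ p_2=\infty$ costs $\tfrac12$ derivative in dimension one, so $H^s=B^s_{2,2}\hookrightarrow B^{s-1/2}_{\infty,2}\hookrightarrow B^{s-1/2}_{\infty,\infty}$, and since $s-\tfrac12>1$ a further inclusion yields $B^{s-1/2}_{\infty,\infty}\hookrightarrow B^1_{\infty,\infty}$; altogether $\|u(t)\|_{B^1_{\infty,\infty}}\leq C\|u(t)\|_{H^s}$.

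With these pointwise estimates the conclusion is immediate: integrating over $[0,T]$ gives
\[
\int_0^T\|\partial_x u(t)\|_{L^\infty}\,\ud t\leq C\int_0^T\|u(t)\|_{H^s}\,\ud t\leq CMT<+\infty,
\]
and likewise $\int_0^T\|u(t)\|_{B^1_{\infty,\infty}}\,\ud t\leq CMT<+\infty$. The computation itself is routine; the two points that require care are verifying the Besov embedding chain with the correct indices, so that $B^1_{\infty,\infty}$ is genuinely reached from $H^s$ when $s>\tfrac32$, and exploiting the closed-interval regularity $\mathcal{C}([0,T];H^s)$ of Lemma \ref{local} rather than merely $\mathcal{C}([0,T);H^s)$, since it is exactly the compactness of $[0,T]$ that upgrades continuity of $\|u(t)\|_{H^s}$ to the finite uniform bound $M$. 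I expect this compactness step to be the only genuine subtlety. The energy inequality of Proposition \ref{en} is not needed for the direction proved here; it enters instead when one turns the criterion around in the proof of Theorem \ref{global}, where finiteness of $\int_0^T\|\partial_x u\|_{L^\infty}\,\ud t$ is combined with \eqref{h} and Gronwall's inequality to preclude blow-up of the $H^s$ norm.
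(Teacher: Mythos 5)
Your proof establishes the lemma exactly as printed, but the printed statement contains a typo that the rest of the paper makes unmistakable: in the proof of Theorem \ref{global} the authors conclude globality from the uniform boundedness of $\|u_x(t,\cdot)\|_{L^{\infty}}$, and in the proof of Theorem \ref{ill} they deduce $\|u\|_{L^{\infty}(0,T_{1};B^{1}_{\infty,\infty})}=+\infty$ from finite-time blow-up. Both deductions require the standard blow-up criterion, i.e. the conclusion with ``$=+\infty$'': if the maximal existence time $T$ is finite, then $\int_{0}^{T}\|\partial_{x}u(t)\|_{L^{\infty}}\,\ud t=+\infty$ (equivalently, finiteness of this integral permits continuation of the solution past $T$). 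That is also what the paper's own proof is: a one-line reference to the continuation criterion of Theorem 3.25 in \cite{book}, whose argument runs through Proposition \ref{en} --- if $\int_{0}^{T}\|u_{x}\|_{L^{\infty}}\,\ud t<\infty$, then Gronwall applied to \eqref{h} keeps $\|u(t)\|_{H^{s}}$ bounded on $[0,T)$, and Lemma \ref{local} then extends the solution beyond $T$, contradicting maximality. So what you proved is not the statement the paper needs; with your reading, neither application of the lemma in Sections 3 and 4 could be carried out.

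Moreover, even as a proof of the literal statement, your argument has a fatal flaw, and it is precisely the step you single out as ``the only genuine subtlety.'' Reading Lemma \ref{local} as giving $u\in\mathcal{C}([0,T];H^{s})$ on the \emph{closed} interval, with $T$ the finite \emph{maximal} time, is self-contradictory: if $u$ were continuous in $H^{s}$ up to and including $t=T$, then applying Lemma \ref{local} with initial datum $u(T)\in H^{s}$ would produce a solution on $[T,T+\delta)$, and gluing would extend $u$ past $T$, contradicting maximality. (The closed bracket in the statement of Lemma \ref{local} is the same notational sloppiness as the typo in Lemma \ref{blowup}; for a maximal solution the correct space is $\mathcal{C}([0,T);H^{s})$.) The standard continuation argument in fact shows $\limsup_{t\to T^{-}}\|u(t)\|_{H^{s}}=+\infty$ whenever $T<\infty$ is maximal, so your maximum $M$ does not exist. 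Your two embedding chains ($H^{s}\hookrightarrow W^{1,\infty}$ and $H^{s}=B^{s}_{2,2}\hookrightarrow B^{s-1/2}_{\infty,\infty}\hookrightarrow B^{1}_{\infty,\infty}$ for $s>\tfrac32$) are correct and are indeed the pointwise bounds one would use, but the quantity they are integrated against is unbounded near $t=T$, and the integrals you are trying to bound are in truth divergent, not finite.
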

\begin{proof}
The proof of the lemma is similar to that of Theorem 3.25 in \cite{book} and here we omit it.
\end{proof}

Let's introduce the ordinary differential equation of the flow generated by the solution $u$ to \eqref{bb} with $u_{0}\in H^{s}(\mathbb{R})(s>\frac32)$
\begin{equation}\label{q}
\left\{\begin{array}{l}
\frac{\ud}{\ud t}y(t,x)=u(t,y(t,x)),\quad t\in[0,T),\quad x\in\mathbb{R},\\
y(0,x)=x,\qquad\qquad\quad\quad x\in\mathbb{R}.
\end{array}\right.
\end{equation}
By use of the classical theory of ordinary differential equations and the fact that $u \in H^s(\mathbb{R})$ with $s>\frac{3}{2},$ we see problem \eqref{q} has a unique solution $y\in\mathcal{C}^1\big([0,T)\times\mathbb{R};\mathbb{R}\big)$ such that the map $y(t,\cdot)$ is an increasing diffeomorphism of $\mathbb{R}$ with
\begin{equation}\label{yx}
y_x(t,x)=\exp\Big(\int_0^t u_x\big(t',y(t',x)\big)\ud t'\Big)>0,\quad \forall~(t,x)\in[0,T)\times\mathbb{R}.
\end{equation}
It follows from \eqref{yx} by setting $m(t,x)=u(t,x)-u_{txx}(t,x)$ that 
\begin{align}\label{mq}
m(t,y(t,x))y_{x}^{b}(t,x)=m_{0}(x),\quad \forall~(t,x)\in[0,T)\times\mathbb{R}.
\end{align}

\begin{proof}[\rm{\textbf{The proof of Theorem \ref{global}:}}] Applying the density argument, we only need to prove Theorem \ref{global} for $s=3.$ Due to Lemma \ref{local}, we know that \eqref{bb} has a unique local solution $u\in \mathcal{C}\big([0,T];H^{s}\big)\cap \mathcal{C}^{1}\big([0,T];H^{s-1}\big).$ Since $m=u-u_{xx}$ and the fact that $p(x)=\frac12e^{-|x|},$ we have
\begin{align}\label{u}
u(t,x)=\frac12e^{-x}\int_{-\infty}^{x}e^{\xi}m(t,\xi)\ud\xi+\frac12e^{-x}\int_{x}^{+\infty}e^{-\xi}m(t,\xi)\ud\xi.
\end{align} 
By differentiating \eqref{u} with respect to $x,$ we deduce
\begin{align}\label{ux}
u_{x}(t,x)=-\frac12e^{-x}\int_{-\infty}^{x}e^{\xi}m(t,\xi)\ud\xi+\frac12e^{-x}\int_{x}^{+\infty}e^{-\xi}m(t,\xi)\ud\xi.
\end{align}
Combing \eqref{u} and \eqref{ux}, we see
\begin{align}\label{uux}
\big(u^{2}-u_{x}^{2}\big)(t,x)=\int_{-\infty}^{x}e^{\xi}m(t,\xi)\ud\xi\int_{x}^{+\infty}e^{-\xi}m(t,\xi)\ud\xi.
\end{align}

For $b\leq1,$ using the assumption \eqref{m} in Theorem \ref{global} and the equality \eqref{yx}, we observe
\begin{equation}\label{mt}
\left\{\begin{array}{ll}
m\big(t,y(t,x)\big)\geq0,&\qquad \text{if}\quad x\leq y\big(t,x_{0}\big),\\
m\big(t,y(t,x)\big)\leq0,&\qquad \text{if}\quad x\geq y\big(t,x_{0}\big).
\end{array}\right.
\end{equation}

Similarly, for $b\geq1,$ we have 
\begin{equation}\label{mtt}
\left\{\begin{array}{ll}
m\big(t,y(t,x)\big)\geq0,&\qquad \text{if}\quad x\geq y\big(t,x_{0}\big),\\
m\big(t,y(t,x)\big)\leq0,&\qquad \text{if}\quad x\leq y\big(t,x_{0}\big).
\end{array}\right.
\end{equation}

Plugging \eqref{mt} or \eqref{mtt} into \eqref{uux}, we obtain
\begin{align}\label{uux0}
\big(u^{2}-u_{x}^{2}\big)\big(t,y(t,x_{0})\big)\leq0.
\end{align}  

Note that $m$ solves the following equation
\begin{align}\label{mm}
m_{t}+\big(um\big)_{x}=(1-b)u_{x}m.
\end{align}

Therefore, we have for $b\leq1$
\begin{align}
\|m\|_{L^{1}}=&\int_{-\infty}^{y(t,x_{0})}m\ud x-\int_{y(t,x_{0})}^{+\infty}m\ud x\notag\\
=&\|m_{0}\|_{L^{1}}+\int_{0}^{t}\bigg(\int_{-\infty}^{y(t,x_{0})}(1-b)u_{x}m\ud x+\int_{y(t,x_{0})}^{+\infty}-(1-b)u_{x}m\ud x\bigg)\ud\tau.\label{ml1}
\end{align}
and for $b\geq1$
\begin{align}
\|m\|_{L^{1}}=&\int_{y(t,x_{0})}^{+\infty}m\ud x-\int_{-\infty}^{y(t,x_{0})}m\ud x\notag\\
=&\|m_{0}\|_{L^{1}}+\int_{0}^{t}\bigg(\int_{y(t,x_{0})}^{+\infty}-(b-1)u_{x}m\ud x+\int_{-\infty}^{y(t,x_{0})}(b-1)u_{x}m\ud x\bigg)\ud\tau\label{ml11}
\end{align}

It then turns out from \eqref{uux0} that for $b\leq1$ 
\begin{align}
	\int_{-\infty}^{y(t,x_{0})}(1-b)u_{x}m\ud x=&\int_{-\infty}^{y(t,x_{0})}(1-b)u_{x}(u-u_{xx})\ud x\notag\\
	=&\frac{1-b}{2}\Big[u^{2}\big(t,y(t,x_{0})\big)-u_{x}^{2}\big(t,y(t,x_{0})\big)\Big]\leq0,\label{uxm1}\\
	\int_{y(t,x_{0})}^{+\infty}-(1-b)u_{x}m\ud x=&\int_{y(t,x_{0})}^{+\infty}-(1-b)u_{x}(u-u_{xx})\ud x\notag\\
	=&-\frac{1-b}{2}\Big[-u^{2}\big(t,y(t,x_{0})\big)+u_{x}^{2}\big(t,y(t,x_{0})\big)\Big]\notag\\
	=&\frac{1-b}{2}\Big[u^{2}\big(t,y(t,x_{0})\big)-u_{x}^{2}\big(t,y(t,x_{0})\big)\Big]\leq0.\label{uxm2}
\end{align}

The similar argument gives that for $b\geq1$
\begin{align}
	\int_{y(t,x_{0})}^{+\infty}-(b-1)u_{x}m\ud x=&\int_{y(t,x_{0})}^{+\infty}-(b-1)u_{x}(u-u_{xx})\ud x\notag\\
	=&-\frac{b-1}{2}\Big[-u^{2}\big(t,y(t,x_{0})\big)+u_{x}^{2}\big(t,y(t,x_{0})\big)\Big]\notag\\
	=&\frac{b-1}{2}\Big[u^{2}\big(t,y(t,x_{0})\big)-u_{x}^{2}\big(t,y(t,x_{0})\big)\Big]\leq0,\label{uxm3}\\
	\int_{-\infty}^{y(t,x_{0})}(b-1)u_{x}m\ud x=&\int_{y(t,x_{0})}^{+\infty}-(b-1)u_{x}(u-u_{xx})\ud x\notag\\
	=&\frac{b-1}{2}\Big[u^{2}\big(t,y(t,x_{0})\big)-u_{x}^{2}\big(t,y(t,x_{0})\big)\Big]\leq0.\label{uxm4}
\end{align}

In view of \eqref{uxm1}--\eqref{uxm4}, we see from \eqref{ml1} or \eqref{ml11} that
\begin{align}
\|m\|_{L^{1}}\leq \|m_{0}\|_{L^{1}}.\label{mm0}
\end{align}
Noting again that $u-u_{xx}=m,$ we get from the $L^{1}$-theory for linear elliptic equations that $u(t,\cdot)\in W^{2,1}(\mathbb{R}).$ Then the Sobolev imbedding theorem $W^{2,1}(\mathbb{R})\hookrightarrow \mathcal{C}_{B}^{1}(\mathbb{R})$ and the inequality \eqref{mm0} together imply that $\|u(t,\cdot)\|_{L^{\infty}}$ and $\|u_{x}(t,\cdot)\|_{L^{\infty}}$ are uniformly bounded for all $t\in [0,T).$ We conclude from Lemma \ref{blowup} that the corresponding solution to \eqref{bb} is global in time. This completes the proof of Theorem \ref{global}. 
\end{proof}

\section{The proof of Theorem \ref{ill}}
This section is devoting to establishing the ill-posedness for the problem \eqref{bb} by the norm inflation.

\begin{proof}[\rm{\textbf{The proof of Theorem \ref{ill}:}}] 
Let $\tilde{\varphi}\in\mathcal{D}$ is an even, non-negative, non-zero function such that $\tilde{\varphi}\varphi=\tilde{\varphi}.$ 

\textbf{Case 1: $b<1.$} For $N>0$ large enough, define $$u_{0}(x)=-\sum\limits_{n=2}^{N}\frac{h_n(x)\cdot(\ln N)^{-1}}{2^{2n}n^{\frac{2}{1+q}}}$$ 
where $\hat{h}_n(\xi)=i2^{-n}\xi\tilde{\varphi}(2^{-n}\xi).$ It is easy to veirfy that $u_{0}\in \mathcal{C}^{\infty}(\mathbb{R})\cap B^{\frac 3 2}_{2,q}(\mathbb{R})$ is odd and
\begin{align}
&\|u_{0}\|_{B^{\frac 3 2}_{2,q}}\leq\frac{1}{\ln N},\label{u0ln}\\
&u_{0x}(0)\geq N^{1-\frac{2}{1+q}}=N^{\frac{q-1}{1+q}}>0.\label{ux0}
\end{align}

Since $u_{0}\in H^{s}(\mathbb{R})$ with $s>\frac32,$ by taking advantage of Theorem \eqref{local}, we know that there exists a unique solution $u\in \mathcal{C}([0,T_N);H^\infty)$ for fixed $N>0.$

Assume that there exists a $T_{1}>0$ (~let's assume $T_{1}=\frac{1}{\ln N}$~) small enough such that 
\begin{align}
\|u\|_{L^{\infty}(0,T_{1};B^{\frac32}_{2,q})}\leq \ln N.\label{uln}
\end{align}
It follows from Theorem \ref{local} and Proposition \ref{en} that $u\in L^{\infty}\big(0,T_{1};H^{s}\big)$ and 
\begin{align*}
\|u\|_{L^{\infty}_{T_{1}}H^{s}}\leq \|u\|_{L^{\infty}_{T_{1}}H^{s}}e^{\int_{0}^{T_{1}}\|u\|_{B^{\frac32}_{2,\infty}}\ud t}.
\end{align*}

Noting that $u_{0}(x)$ is odd, we see by the uniqueness of solutions that $u(t,x)$ is odd and $u(t,0)=u_{xx}(t,0)=0$ on $[0,T_{1}].$ Differentiating Eq. \eqref{bb} with respect to $x,$ we deduce
\begin{align}
u_{tx}+uu_{xx}=\frac{1-b}{2}u_{x}^{2}+\frac{b}{2}u^{2}-(1-\partial_{xx})^{-1}\Big(\frac{b}{2}u^{2}+\frac{3-b}{2}u_{x}^{2}\Big).\label{uxt}
\end{align}
Setting $x=0$ in \eqref{uxt} and then applying the inequality \eqref{uln}, we get
\begin{align}\label{ut0}
\frac{\ud}{\ud t}u_{x}(t,0)\geq \frac{1-b}{2}u_{x}^{2}(t,0)-2\big(\ln N\big)^{2},\qquad \forall t\in[0,T_{1}).
\end{align}
Thanks to \eqref{ux0}, we see for $q>1$ and $N>0$ large enough that
\begin{align*}
u_{0x}(0)\geq N^{\frac{q-1}{1+q}}>\frac{2\ln N}{\sqrt{1-b}}>0.
\end{align*}
It follows that $u_{x}(t,0)>\frac{2\ln N}{\sqrt{1-b}}>0.$ Then solving inequality \eqref{ut0} gives 
\begin{align}
1-\frac{\frac{\sqrt{b-1}}{2}u_{0x(0)-\ln N}}{\frac{\sqrt{b-1}}{2}u_{0x(0)+\ln N}}e^{2\sqrt{2}\ln N t}\geq\frac{2\ln N}{\frac{\sqrt{b-1}}{2}u_{0x(0)+\ln N}}>0.
\end{align}
Owing to $0<\frac{\frac{\sqrt{b-1}}{2}u_{0x(0)-\ln N}}{\frac{\sqrt{b-1}}{2}u_{0x(0)+\ln N}}<1,$ we deduce that there exists $0<T_{2}<\frac{1}{2\sqrt{b-1}\ln N}\ln\frac{\frac{\sqrt{2}}{2}u_{0x(0)+\ln N}}{\frac{\sqrt{b-1}}{2}u_{0x(0)-\ln N}}<T_{1}$ for $N>0$ large enough such that the solution blows up
in finite time $T_{2}.$ By virtue of the blow-up scenario i.e. Lemma \ref{blowup}, we find 
\begin{align*}
\|u\|_{L^{\infty}(0,T_{1};B^{1}_{\infty,\infty})}=+\infty
\end{align*}
which is contradict to the assumption \eqref{uln}. Therefore, 
\begin{align*}
\|u\|_{L^{\infty}(0,T_{1};B^{\frac32}_{2,q})}\geq\ln N.
\end{align*}

\textbf{Case 2:} For $b>1,$ set $u_{0}(x)=\sum\limits_{n=2}^{N}\frac{h_n(x)\cdot(\ln N)^{-1}}{2^{2n}n^{\frac{2}{1+q}}}.$ Then the proof is similar to that of in \textbf{Case 1} and here we are not repeating it.

This finishes the proof of Theorem \ref{ill}.
\end{proof}

\noindent\textbf{Acknowledgements.}
Guo was supported by the National Natural Science Foundation of China (No. 12301298, No. 12161004), the Basic and Applied Basic Research Foundation of Guangdong Province (No.
2020A1515111092) and Research Fund of Guangdong-Hong Kong-Macao Joint Laboratory for Intelligent Micro-Nano Optoelectronic Technology (No. 2020B1212030010). Ye was supported by the general project of NSF
of Guangdong province (No. 2021A1515010296).


\begin{thebibliography}{10}
	
	\bibitem{book}
	H.~Bahouri, J.~Y. Chemin, and R.~Danchin.
	\newblock {\em Fourier analysis and nonlinear partial differential equations},
	volume 343 of {\em Grundlehren der Mathematischen Wissenschaften [Fundamental
		Principles of Mathematical Sciences]}.
	\newblock Springer, Heidelberg, 2011.
	
	\bibitem{ck}
	G.~M. Coclite and K.~H. Karlsen.
	\newblock On the well-posedness of the {D}egasperis-{P}rocesi equation.
	\newblock {\em J. Funct. Anal.}, 233(1):60--91, 2006.
	
	\bibitem{c2}
	A.~Constantin.
	\newblock Existence of permanent and breaking waves for a shallow water
	equation: a geometric approach.
	\newblock {\em Ann. Inst. Fourier (Grenoble)}, 50(2):321--362, 2000.
	
	\bibitem{ce4}
	A.~Constantin and J.~Escher.
	\newblock Global existence and blow-up for a shallow water equation.
	\newblock {\em Ann. Scuola Norm. Sup. Pisa Cl. Sci. (4)}, 26(2):303--328, 1998.
	
	\bibitem{d2}
	R.~Danchin.
	\newblock A note on well-posedness for {C}amassa-{H}olm equation.
	\newblock {\em J. Differential Equations}, 192(2):429--444, 2003.
	
	\bibitem{dhh}
	A.~Degasperis, D.~D. Holm, and A.~N.~W. Hone.
	\newblock Integrable and non-integrable equations with peakons.
	\newblock In {\em Nonlinear physics: theory and experiment, {II} ({G}allipoli,
		2002)}, pages 37--43. World Sci. Publ., River Edge, NJ, 2003.
	
	\bibitem{dep}
	A.~Degasperis and M.~Procesi.
	\newblock Asymptotic integrability.
	\newblock In {\em Symmetry and perturbation theory ({R}ome, 1998)}, pages
	23--37. World Sci. Publ., River Edge, NJ, 1999.
	
	\bibitem{ely2}
	J.~Escher, Y.~Liu, and Z.~Yin.
	\newblock Shock waves and blow-up phenomena for the periodic
	{D}egasperis-{P}rocesi equation.
	\newblock {\em Indiana Univ. Math. J.}, 56(1):87--117, 2007.
	
	\bibitem{ey1}
	J.~Escher and Z.~Yin.
	\newblock Well-posedness, blow-up phenomena, and global solutions for the
	{$b$}-equation.
	\newblock {\em J. Reine Angew. Math.}, 624:51--80, 2008.
	
	\bibitem{gl}
	G.~Gui and Y.~Liu.
	\newblock On the {C}auchy problem for the {D}egasperis-{P}rocesi equation.
	\newblock {\em Quart. Appl. Math.}, 69(3):445--464, 2011.
	
	\bibitem{glt}
	G.~Gui, Y.~Liu, and L.~Tian.
	\newblock Global existence and blow-up phenomena for the peakon {$b$}-family of
	equations.
	\newblock {\em Indiana Univ. Math. J.}, 57(3):1209--1234, 2008.
	
	\bibitem{glmy}
	Z.~Guo, X.~Liu, L.~Molinet, and Z.~Yin.
	\newblock Ill-posedness of the {C}amassa-{H}olm and related equations in the
	critical space.
	\newblock {\em J. Differential Equations}, 266(2-3):1698--1707, 2019.
	
	\bibitem{hgh}
	A.~A. Himonas, K.~Grayshan, and C.~Holliman.
	\newblock Ill-posedness for the {$b$}-family of equations.
	\newblock {\em J. Nonlinear Sci.}, 26(5):1175--1190, 2016.
	
	\bibitem{hs2}
	D.~D. Holm and M.~F. Staley.
	\newblock Nonlinear balance and exchange of stability of dynamics of solitons,
	peakons, ramps/cliffs and leftons in a {$1+1$} nonlinear evolutionary {PDE}.
	\newblock {\em Phys. Lett. A}, 308(5-6):437--444, 2003.
	
	\bibitem{hs1}
	D.~D. Holm and M.~F. Staley.
	\newblock Wave structure and nonlinear balances in a family of evolutionary
	{PDE}s.
	\newblock {\em SIAM J. Appl. Dyn. Syst.}, 2(3):323--380, 2003.
	
	\bibitem{lo}
	Y.~A. Li and P.~J. Olver.
	\newblock Well-posedness and blow-up solutions for an integrable nonlinearly
	dispersive model wave equation.
	\newblock {\em J. Differential Equations}, 162(1):27--63, 2000.
	
	\bibitem{ly2}
	Y.~Liu and Z.~Yin.
	\newblock Global existence and blow-up phenomena for the {D}egasperis-{P}rocesi
	equation.
	\newblock {\em Comm. Math. Phys.}, 267(3):801--820, 2006.
	
	\bibitem{ly3}
	Y.~Liu and Z.~Yin.
	\newblock On the blow-up phenomena for the {D}egasperis-{P}rocesi equation.
	\newblock {\em Int. Math. Res. Not. IMRN}, 2007(23):Art. ID rnm117, 22, 2007.
	
	\bibitem{lu}
	H.~Lundmark.
	\newblock Formation and dynamics of shock waves in the {D}egasperis-{P}rocesi
	equation.
	\newblock {\em J. Nonlinear Sci.}, 17(3):169--198, 2007.
	
	\bibitem{lw}
	G.~Lv and M.~Wang.
	\newblock Blow-up solutions of the general b-equation.
	\newblock {\em J. Math. Phys.}, 51(12):123101, 10, 2010.
	
	\bibitem{y5}
	Z.~Yin.
	\newblock Global existence for a new periodic integrable equation.
	\newblock {\em J. Math. Anal. Appl.}, 283(1):129--139, 2003.
	
	\bibitem{y6}
	Z.~Yin.
	\newblock Global solutions to a new integrable equation with peakons.
	\newblock {\em Indiana Univ. Math. J.}, 53(4):1189--1209, 2004.
	
\end{thebibliography}

\end{document}